\newtheorem{thm}{Theorem}
\newtheorem{lema}{Lemma}
\newtheorem{remark}{Remark}
\newcommand{\TheAuthor}{}
\newcommand{\Author}[1]{\renewcommand{\TheAuthor}{#1}}
\newcommand{\TheTitle}{}
\newcommand{\Title}[1]{\renewcommand{\TheTitle}{#1}}
\begin{document}


\parindent=8mm
\noindent

\vskip -3mm
\noindent
\vskip -1mm
\noindent

\vspace{1cm}
\begin{center}
{\Large\bf On $r$-dynamic coloring on lexicographic product of star graphs}
\end{center}
\vspace{4mm}

\begin{center}
{\large GOMATHI C S}\footnote{PG and Department of Mathematics, Kongunadu Arts and Science College, Coimbatore-641 029, Tamil Nadu, India, Email: {\tt gomathi9319@gmail.com}}
{\large MOHANAPRIYA N}\footnote{PG and Department of Mathematics, Kongunadu Arts and Science College, Coimbatore-641 029, Tamil Nadu, India, Email: {\tt n.mohanamaths@gmail.com}}\\
{\large VERNOLD VIVIN J}\footnote{Department of Mathematics, University College of Engineering Nagercoil, (A Constituent College of Anna University Chennai), Konam, Nagercoil 629 004, Tamil Nadu, India, Email: {\tt vernoldvivin@yahoo.in}}
\end{center}
\vspace{3ex}

\begin{abstract}
In this research, the exact results on $r$-dynamic coloring of lexicographic product of path with star graph, path with double star graph, path with triple star graph and finally complete graph with path are obtained.
\end{abstract}

\textbf{Mathematics Subject Classification}: 05C15\\

\textbf{Keywords}: $r$-dynamic coloring; lexicographic product; star graph; double star graph; triple star graph.

\section{Introduction}
 The $r$-dynamic coloring was introduced by Bruce Montgomery \cite{mon}. It is a proper vertex $k$-coloring which is extended from dynamic coloring. An $r$-dynamic coloring is defined by the following mapping $c : V(G)\rightarrow \{1,2,\cdots,k\} $ it satisfy two condition:
\begin{equation*}\label{1}
c(u)\neq c(v) \hspace{6.5cm}(1.1)
\end{equation*}
\begin{equation*}\label{2}
\left|c(N(G)\right|\geqslant min\left\{r,d(v)\right\} \hspace{4cm}(1.2)
\end{equation*}
 Where, $N(v)$ denote the neighborhood vertices of $v$ and $d(v)$ denote the degree of the vertex $v$ and $k$ is the minimum positive integer of $r$-dynamic chromatic number denoted as $\chi_r (G)$. At $r=1$ the $1$-dynamic chromatic number is equal to the chromatic number of the graph and at $r=2$ it is called as dynamic chromatic number. From the literature \cite{fur},\cite{jah},\cite{tah} one can learn the wide area of $r$-dynamic chromatic number of different types of graphs and also the product of graphs are explained in \cite{ciz},\cite{gel},\cite{ham},\cite{Nes}. To the knowledge the $r$- dynamic chromatic number of lexicographic product is an starting one to enrich the topic. Moreover, we focus on the products of path with finite graphs.
\par The next section is organized with basic preliminaries and some lemmas which are useful for the main results. Then, section 3, deals with the $r$-dynamic chromatic number of lexicographic product of path with different graphs to find the exact values.
\section{Preliminaries}
In this section, we deals with basic definitions and some preliminary lemmas which are carried for the main results.
A graph $G=(V(G), E(G))$ where $V(G)$ is the set of vertices and $E(G)$ are the edges. A graph which has an identical ends at the edge are called as \textit{loop}. A graph which has no loops, and undirected graph are said to be \textit{simple}. Then a graph is said to be \textit{finite} if the order and size of the graph are finite.\par A \textit{path} between two distinct vertices $v$ and $w$ of $G$ are in sequence of ordered adjacent and $(q_0=v,q_1,q_2,\cdots,q_{l-1}=w)$ in $V(G)$ are pairwise adjacent. A graph is \textit{complete} if all the vertices are pairwise adjacent. \textit{Star $K_{1,m}$} is an tree with a vertices $\{s_0,s_1,s_2,\cdots,s_{m-1}\}$ where $s_0$ is the center vertex that are adjacent to pendant vertices $s_i$, where $1\leq i \leq m-1$. It has $m$ vertices and $m-1$ edges. \textit{Double star $K_{1,m,m}$} is obtained from star by an inclusion of $m$ pendant vertices to the existing pendant vertices $s_i$ through an edge, where $1\leq i\leq m-1$ and it has $2m+1$ vertices and $2m$ edges. \textit{Triple star $K_{1,m,m,m}$} is obtained from double star by including new $m$ pendant edge to the existing pendant vertices. It has $3m+1$ vertices and $3m$ edges.  In this work, we considered the simple, connected and undirected graph. The maximum and minimum degrees of the graph $G$ are denoted as $\delta(G)$ and $\Delta(G)$. 
\par Graph products was defined by Sabidussi and Vizing \cite{viz} and then many works has done on products but still there are many open question. In the upcoming years, graph products is an well-developing work in graph theory in that, $r$-dynamic coloring of products are described as well. Product graphs has many applications in design of interconnection networks. Also product of graph creates a larger graphs from smaller one. The  most commonly used product graphs are cartesian product, strong product, direct product and lexicographic product. The above mentioned products are commutative except lexicographic product which is associative but not commutative. In this work, we considered the lexicographic product, which was introduced by Harray (1959) but it essentially back to Hausdroff.
\par The lexicographic product {\it (graph substitution)} of graphs $G_1 [G_2]$ has vertex set $V(G_1)\times V(G_2)$ and $(v_1,w_1)(v_2,w_2)\in E(G_1 [G_2])$ whenever $(v_1,w_1)\in E(G_1)$ and $(v_2,w_2)\in E(G_2)$. The size of lexicographic product of any graphs can be generalized in the form $w_{2}v_1 +w_{1}v_2 ^2$, where $v_1=|V(G_1)|$, $v_2=|V(G_2)|$, $w_1=|E(G_1)|$ and $w_2=|E(G_2)|$. 
\begin{lema} Let $G$ be an finite, connected graph. Then, the following conditions holds:
\begin{itemize}
\item[$\ast$] $min \{r, \Delta(G)\}+ 1 \leq \chi_r(G)\leq \chi_{r+1}(G)$
\item[$\ast$] $\chi(G)=\chi_1(G)\leq \chi_2(G)\leq \cdots\leq \chi_{\Delta(G)}(G).$
\end{itemize}
\end{lema}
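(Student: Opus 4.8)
The plan is to establish the two chains of inequalities separately, both resting on the observation that an $r$-dynamic coloring is, by definition, also an $r'$-dynamic coloring for every $r'\le r$, since condition (1.2) for parameter $r'$ is weaker than for $r$ (because $\min\{r',d(v)\}\le\min\{r,d(v)\}$). First I would prove the lower bound $\min\{r,\Delta(G)\}+1\le\chi_r(G)$. Pick a vertex $v$ of maximum degree $\Delta(G)$. In any $r$-dynamic coloring $c$, condition (1.2) forces $c$ to use at least $\min\{r,d(v)\}=\min\{r,\Delta(G)\}$ distinct colors on $N(v)$, and condition (1.1) forces $c(v)$ to differ from all of these; hence at least $\min\{r,\Delta(G)\}+1$ colors appear, giving the bound. (If $G$ has an isolated vertex the claim is trivial, but the hypothesis of connectedness with $|V(G)|\ge 2$ rules this out; for $|V(G)|=1$ the statement holds vacuously with both sides equal to $1$.)

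Next I would prove $\chi_r(G)\le\chi_{r+1}(G)$. Let $k=\chi_{r+1}(G)$ and let $c$ be an optimal $(r+1)$-dynamic coloring using $k$ colors. For every vertex $v$ we have $|c(N(v))|\ge\min\{r+1,d(v)\}\ge\min\{r,d(v)\}$, so $c$ also satisfies the defining conditions of an $r$-dynamic coloring. Therefore $c$ is an $r$-dynamic $k$-coloring, whence $\chi_r(G)\le k=\chi_{r+1}(G)$. Iterating this gives the second bullet point: $\chi(G)=\chi_1(G)\le\chi_2(G)\le\cdots\le\chi_{\Delta(G)}(G)$, where the first equality holds because at $r=1$ condition (1.2) reads $|c(N(v))|\ge\min\{1,d(v)\}$, which is automatically satisfied by any proper coloring of a graph with no isolated vertices, so the $1$-dynamic constraint reduces to properness.

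Finally, to close the chain one should note that for $r\ge\Delta(G)$ the quantity $\min\{r,d(v)\}$ equals $d(v)$ for every vertex, so the constraint (1.2) stabilizes; this justifies truncating the chain at $\chi_{\Delta(G)}(G)$ and is also why $\min\{r,\Delta(G)\}$ rather than $r$ appears in the lower bound. I do not anticipate a serious obstacle here: the result is essentially a direct unwinding of the definition, and the only point requiring a little care is the degenerate case of isolated vertices in the identification $\chi_1(G)=\chi(G)$, which the connectedness hypothesis handles. The main thing to get right is the direction of the monotonicity argument — one passes a coloring \emph{down} from parameter $r+1$ to parameter $r$, not the other way.
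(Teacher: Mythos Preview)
Your argument is correct and is exactly the standard proof of this well-known fact. Note, however, that the paper does not actually supply a proof of Lemma~1: it is stated in the Preliminaries section as a known result and used freely in the later arguments, but no proof environment follows it. So there is nothing in the paper to compare your proposal against; your write-up simply fills in the omitted (routine) verification, and it does so cleanly. The only cosmetic remark is that your handling of the degenerate cases (isolated vertices, $|V(G)|=1$) is more careful than necessary given the blanket hypothesis that $G$ is finite and connected, but this does no harm.
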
 
\begin{thm} For any  positive integers $p~,t,~r$, the following results holds:\\
i) $\chi_r [K_t)] =t.$ \\
ii) For  $r\geq 2$, we get
\begin{equation*}
\chi_r [C_p)] = \left \{
\begin{array}{ll}
5, &  \mbox{for}~ p = 5\\
3, & \mbox{for} ~p = 0 (\bmod \ 3) \hspace{4cm} (1.3)\\
4, & otherwise 
 \end{array} 
\right.
\end{equation*}
\end{thm}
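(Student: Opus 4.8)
\noindent\emph{Proof plan.}
Part~(i) is short. For the lower bound, $K_t$ has clique number $t$, so $\chi(K_t)=\chi_1(K_t)=t$, whence $\chi_r(K_t)\ge t$ by the monotonicity in Lemma~1. For the upper bound, colour the $t$ vertices with $t$ pairwise distinct colours; this is proper, and each vertex $v$ has degree $t-1$ with all $t-1$ of its neighbours coloured differently, so $|c(N(v))|=t-1\ge\min\{r,d(v)\}$ and the colouring is $r$-dynamic. Hence $\chi_r(K_t)\le t$.

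For part~(ii), the first step is to reduce every $r\ge2$ to $r=2$: since $C_p$ is $2$-regular, $\min\{r,d(v)\}=2$ for every vertex and every $r\ge2$, so an $r$-dynamic colouring is precisely a proper colouring in which the two neighbours of each vertex receive distinct colours; thus $\chi_r(C_p)=\chi_2(C_p)$ for all $r\ge2$. Writing $C_p=v_0v_1\cdots v_{p-1}$ with indices taken mod $p$, these requirements say that $c(v_i)\ne c(v_j)$ whenever $i-j\equiv\pm1$ or $\pm2\pmod p$, i.e.\ that $c$ properly colours the square $C_p^2$.

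I would then split into three cases. If $p\equiv0\pmod3$, Lemma~1 gives $\chi_2(C_p)\ge\min\{2,2\}+1=3$, and the periodic assignment $1,2,3,1,2,3,\dots$ satisfies both conditions, so $\chi_r(C_p)=3$. If $p=5$, then modulo $5$ the four differences $\pm1,\pm2$ run through every nonzero residue, hence $C_5^2=K_5$ and $\chi_2(C_5)=5$. If $3\nmid p$ and $p\ne5$, I would first prove $\chi_2(C_p)\ge4$ by a periodicity argument: in a proper $2$-dynamic $3$-colouring the values $c(v_{i-1}),c(v_i),c(v_{i+1})$ are pairwise distinct for each $i$, hence a permutation of $\{1,2,3\}$, which forces $c(v_{i+2})$ to be the colour distinct from $c(v_i)$ and $c(v_{i+1})$, i.e.\ $c(v_{i+2})=c(v_{i-1})$; thus $c$ is invariant under the shift by $3$, and since $\gcd(3,p)=1$ this makes $c$ constant, a contradiction. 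For the matching upper bound I would write down an explicit $4$-colouring: setting $p=3k+1$ or $p=3k+2$, use $k-1$ copies of the block $1,2,3$ followed by the tail $1,2,3,4$ in the first case, and $k-2$ copies of $1,2,3$ followed by the tail $1,2,3,4,1,2,3,4$ in the second (note that $p\equiv2\pmod3$ with $p\ne5$ forces $p\ge8$, so $k\ge2$); verifying the two colouring conditions, which is only needed at the handful of seam vertices, then finishes the proof.

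The lower bounds --- the clique bound, Lemma~1, and the $3$-periodicity forcing --- are essentially immediate. The step that needs genuine care is the upper bound in the last case: a single $4$-coloured defect block fails to close up when $p\equiv2\pmod3$, so one is forced to use two such blocks and to check the dynamic condition at each of the seam vertices.
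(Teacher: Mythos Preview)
Your proof is correct, but there is nothing in the paper to compare it against: this theorem is stated in the preliminaries (Section~2) as a known background result, with no proof given, alongside the cited Lemmas~2--4. The paper simply quotes it and moves on to the lexicographic-product theorems.

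For the record, your argument is the standard one and each step is sound. The clique bound plus the all-distinct colouring settles~(i). For~(ii), the reduction to $r=2$ via $2$-regularity, the identification of $2$-dynamic colourings of $C_p$ with proper colourings of $C_p^2$, the observation $C_5^2=K_5$, the $3$-periodicity forcing argument that rules out three colours when $3\nmid p$, and the explicit block constructions $123\cdots1234$ and $123\cdots12341234$ for $p\equiv1,2\pmod3$ all check out; the seam verifications are routine. One minor remark: the statement as written allows any positive $p$, but your argument (and the result itself) tacitly assumes $p\ge3$ so that $C_p$ is a genuine cycle---you may want to say so explicitly.
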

\begin{lema}
\cite{gel}If $G_1$ has atleast one edge, then for any graph $G_2$,\\ 
  $\chi(G_1[G_2])\geq \chi(G_1)+2\chi(G_2)-2$
\end{lema}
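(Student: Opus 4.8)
The plan is to extract from an optimal colouring of $G_1[G_2]$ a multiple colouring of $G_1$, and then to exploit that the auxiliary graph governing the ``leftover'' vertices is merely a matching. First I would fix an optimal proper colouring $c$ of $G_1[G_2]$ with $k:=\chi(G_1[G_2])$ colours, and for each $v\in V(G_1)$ record the palette $S_v\subseteq\{1,\dots,k\}$ of colours that $c$ uses on the copy $\{v\}\times V(G_2)$. Two observations drive everything: the copy induces a graph isomorphic to $G_2$, so $|S_v|\ge\chi(G_2)=:t$; and for $v_1v_2\in E(G_1)$ every vertex of the $v_1$-copy is joined to every vertex of the $v_2$-copy, so $S_{v_1}\cap S_{v_2}=\emptyset$. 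Shrinking each $S_v$ to a $t$-element subset $T_v$ keeps $T_u\cap T_v=\emptyset$ whenever $uv\in E(G_1)$, so from now on I may work with an assignment $v\mapsto T_v$ of $t$-sets, pairwise disjoint along the edges of $G_1$, drawn from a ground set of $k$ colours; the goal becomes to build a proper colouring of $G_1$ using at most $k-2t+2$ colours.

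Next I would invoke the hypothesis that $G_1$ has an edge $xy$ (the only place it is needed), single out the sets $A:=T_x$ and $B:=T_y$, which are disjoint since $xy\in E(G_1)$, and put $R:=\{1,\dots,k\}\setminus(A\cup B)$, so $|R|=k-2t$. Split $V(G_1)$ into $W:=\{v:T_v\cap R\ne\emptyset\}$ and $Z:=\{v:T_v\subseteq A\cup B\}$. Colour each $v\in W$ by the least element of $T_v\cap R$; for adjacent $u,v\in W$ the sets $T_u\cap R$ and $T_v\cap R$ are disjoint and hence have distinct minima, so this properly colours $W$ with respect to the edges of $G_1$, using only colours of $R$.

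The decisive step is colouring $Z$. For $v\in Z$ the set $T_v$ is a $t$-subset of the $2t$-element set $A\cup B$, and adjacent vertices of $Z$ receive disjoint such subsets; but in a set of size $2t$ the only $t$-subset disjoint from a given $t$-subset is its complement, so $v\mapsto T_v$ is a graph homomorphism from the subgraph of $G_1$ induced by $Z$ into the Kneser graph $K(2t,t)$, whose vertices are the $t$-subsets of a $2t$-set with disjoint pairs joined. Since each such subset has a unique disjoint partner, namely its complement, $K(2t,t)$ is a perfect matching and hence $2$-colourable; pulling a $2$-colouring back colours $Z$ with two fresh colours, and since no colour of $R$ is used on $Z$ no edge of $G_1$ between $W$ and $Z$ is monochromatic. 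Combining the two parts gives a proper colouring of $G_1$ with $|R|+2=k-2t+2$ colours, so $\chi(G_1)\le\chi(G_1[G_2])-2\chi(G_2)+2$, which is the claim.

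I expect the main obstacle to be treating $Z$ correctly: the tempting attempt to split $Z$ into the vertices with $T_v\subseteq A$ and those with $T_v\subseteq B$ fails, because two adjacent vertices of $Z$ may each have a $t$-set meeting both $A$ and $B$; the right move is to recognise the homomorphism into $K(2t,t)$ and to use that this particular Kneser graph degenerates into a matching. A routine point to record in passing is that $k\ge 2t$, so that $R$ is well defined --- immediate, since $A\cup B$ is a $2t$-element subset of the $k$ available colours.
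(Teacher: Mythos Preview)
The paper does not supply its own proof of this lemma; it is quoted with citation to Geller and Stahl and used as a black box in the preliminaries, so there is nothing in the paper to compare against.

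That said, your argument stands on its own and is correct. The two driving observations---that each fibre needs at least $t=\chi(G_2)$ colours and that adjacent fibres receive disjoint palettes---are exactly right for the lexicographic product, and shrinking to $t$-sets $T_v$ loses nothing. Fixing an edge $xy$ to isolate the $2t$ colours $A\cup B$ and then colouring $W$ by least element of $T_v\cap R$ works because disjoint nonempty subsets of $R$ have distinct minima. The handling of $Z$ is the heart of the matter and you have it precisely: complementary $t$-subsets of a $2t$-set are the only disjoint pairs, so the Kneser graph $K(2t,t)$ is a perfect matching and hence $2$-colourable; pulling this back gives a proper colouring of $G_1$ with $(k-2t)+2$ colours, which rearranges to the stated inequality. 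Your side remark that $k\ge 2t$ (so that $R$ makes sense) is indeed immediate from the existence of the edge $xy$.
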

\begin{lema}
\cite{kla} For any graph G, if $\chi(G_2) = n$, then $\chi(G_1[G_2]) = \chi(G_1[K_n])$.
\end{lema}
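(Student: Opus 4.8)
The plan is to prove the two inequalities $\chi(G_1[G_2]) \le \chi(G_1[K_n])$ and $\chi(G_1[G_2]) \ge \chi(G_1[K_n])$ separately, exploiting the fiber structure of the lexicographic product: for each $v \in V(G_1)$ the set $\{v\}\times V(G_2)$ induces a copy of $G_2$ in $G_1[G_2]$, and whenever $vv' \in E(G_1)$ every vertex of $\{v\}\times V(G_2)$ is adjacent to every vertex of $\{v'\}\times V(G_2)$ (and symmetrically for $G_1[K_n]$ with $K_n$ in place of $G_2$).

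For the upper bound I would begin with an optimal proper coloring $f$ of $G_1[K_n]$ using $\chi(G_1[K_n])$ colors together with a proper coloring $\varphi : V(G_2)\to\{1,\dots,n\}$ of $G_2$, which exists because $\chi(G_2)=n$, and set $g(v,w) = f\big(v,\varphi(w)\big)$ for every $(v,w)\in V(G_1[G_2])$. Checking that $g$ is proper amounts to splitting an edge $(v,w)(v',w')$ of $G_1[G_2]$ into the case $vv' \in E(G_1)$, where $\big(v,\varphi(w)\big)\big(v',\varphi(w')\big)$ is automatically an edge of $G_1[K_n]$ no matter what the second coordinates are, and the case $v=v'$ with $ww' \in E(G_2)$, where $\varphi(w)\ne\varphi(w')$ supplies the required edge inside a fiber. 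This gives $\chi(G_1[G_2]) \le \chi(G_1[K_n])$.

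For the lower bound I would argue in the reverse direction. Fix an optimal proper coloring $h$ of $G_1[G_2]$ using $k=\chi(G_1[G_2])$ colors and, for each $v\in V(G_1)$, let $S_v\subseteq\{1,\dots,k\}$ be the set of colors appearing on the fiber $\{v\}\times V(G_2)$. The restriction of $h$ to this fiber is a proper coloring of a copy of $G_2$, hence uses at least $\chi(G_2)=n$ colors, so $|S_v|\ge n$; and if $vv'\in E(G_1)$ the complete join between the two fibers forces $S_v\cap S_{v'}=\emptyset$. Picking for each $v$ an $n$-element subset $T_v\subseteq S_v$ together with a bijection $\psi_v:\{1,\dots,n\}\to T_v$, the map $(v,i)\mapsto\psi_v(i)$ is a proper coloring of $G_1[K_n]$ with $k$ colors, whence $\chi(G_1[K_n])\le\chi(G_1[G_2])$, and the lemma follows.

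The one genuinely delicate point, which also explains why the argument cannot be shortened to a single homomorphism $K_n\to G_2$ (such a homomorphism need not exist, e.g.\ $G_2=C_5$, $n=3$), is the extraction step for the lower bound: one must observe that \emph{every} proper coloring of $G_2$ uses at least $\chi(G_2)$ colors, so each fiber furnishes a color set of size $\ge n$ that can be thinned to model $K_n$. The rest is routine verification. I would also remark that this reasoning in effect identifies $\chi(G_1[K_n])$ with the $n$-fold (set) chromatic number of $G_1$, which is the conceptual reason both bounds go through.
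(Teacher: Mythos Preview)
Your argument is correct and is the standard proof of this classical result. Note, however, that the paper does not actually supply its own proof of this lemma: it is quoted from Klav\v{z}ar's survey \cite{kla} and stated without proof, so there is no in-paper argument to compare against. Your two-inequality approach via fiber color-sets (which, as you observe, amounts to identifying $\chi(G_1[K_n])$ with the $n$-fold chromatic number of $G_1$) is exactly the reasoning behind the cited result.
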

\begin{lema}
\cite{kla} For any simple and finite graphs $G_1$ and $G_2$. The\\
$\chi(G_1[G_2])\leq \chi(G_1)\chi(G_2)$.
\end{lema}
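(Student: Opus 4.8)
The plan is to exhibit an explicit proper coloring of $G_1[G_2]$ that uses exactly $\chi(G_1)\cdot\chi(G_2)$ colors, which immediately yields the stated upper bound. First I would fix an optimal proper coloring $c_1\colon V(G_1)\to\{1,\dots,\chi(G_1)\}$ of $G_1$ and an optimal proper coloring $c_2\colon V(G_2)\to\{1,\dots,\chi(G_2)\}$ of $G_2$. Then I would define a coloring $c$ of $G_1[G_2]$ on the vertex set $V(G_1)\times V(G_2)$ by the rule $c(v,w)=\bigl(c_1(v),c_2(w)\bigr)$, taking values in the product palette $\{1,\dots,\chi(G_1)\}\times\{1,\dots,\chi(G_2)\}$, whose cardinality is $\chi(G_1)\chi(G_2)$.

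Next I would verify that $c$ is proper. Take any edge $(v_1,w_1)(v_2,w_2)$ of $G_1[G_2]$. By the definition of the lexicographic product, at least one of two mutually exclusive situations occurs: either $v_1v_2\in E(G_1)$, or $v_1=v_2$ with $w_1w_2\in E(G_2)$. In the first case $c_1(v_1)\neq c_1(v_2)$ since $c_1$ is proper, so the first coordinates of $c(v_1,w_1)$ and $c(v_2,w_2)$ differ. In the second case $c_2(w_1)\neq c_2(w_2)$ since $c_2$ is proper, so the second coordinates differ. In either case $c(v_1,w_1)\neq c(v_2,w_2)$, so adjacent vertices receive distinct colors.

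Since $c$ is a proper coloring of $G_1[G_2]$ using at most $\chi(G_1)\chi(G_2)$ colors, we conclude $\chi(G_1[G_2])\le\chi(G_1)\chi(G_2)$. The argument is essentially bookkeeping; the only point requiring care is the correct case analysis of when two vertices of the lexicographic product are adjacent, which I expect to be the main (though rather mild) obstacle — in particular, one must not overlook the case $v_1=v_2$, where adjacency is governed entirely by $G_2$. No lower bound or optimality is asserted, so nothing further is required.
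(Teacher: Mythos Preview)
Your argument is correct and is in fact the standard proof of this inequality: taking the product of optimal colorings of $G_1$ and $G_2$ and checking properness via the two adjacency cases of the lexicographic product. The paper itself does not supply a proof of this lemma at all; it is simply quoted from Klav\v{z}ar's survey \cite{kla}, so there is no in-paper argument to compare against, and your proof would serve as a complete justification where the paper offers none.
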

In the next section, the $r$-dynamic coloring deals with lexicographic product of path with different star graphs and complete graph with path.
\section{Main Results}
\begin{thm}
 For any three positive integers $r$, $l \geq 2$ and $m\geq 3$, then \\
 $\chi_r(P_l[K_{1,m}]) = 
 \begin{cases}
4,& \mbox{for} ~ 1 \leq r \leq 3\\
r+i,& \mbox{for} ~ 4 \leq r \leq \delta-1\\
2m,& \mbox{for} ~ \delta \leq r \leq \Delta,~ l=2\\
2m,& \mbox{for} ~ r= \delta, l \neq 2 \\
2m+i,& \mbox{for} ~ \delta+1 \leq r \leq \Delta-m+1, i\epsilon N, l\neq 2\\
3m,& \mbox{for} ~ \Delta-m+2 \leq r \leq \Delta, l \leq 2
\end{cases}$ 
\end{thm}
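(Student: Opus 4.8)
\noindent The plan is to set up fibre notation, compute the four vertex-degree types, and then treat lower and upper bounds range by range. Label the vertices of $P_l$ by $u_1,\dots,u_l$ and those of $K_{1,m}$ by $s_0$ (centre) and $s_1,\dots,s_{m-1}$ (leaves), and write $V_a=\{u_a\}\times V(K_{1,m})$ for the $a$-th \emph{fibre} of $G:=P_l[K_{1,m}]$. Each $V_a$ induces a $K_{1,m}$, consecutive fibres $V_a,V_{a+1}$ are completely joined ($1\le a\le l-1$), and there are no other edges; hence $\deg(u_a,s_0)=3m-1$ for $2\le a\le l-1$ and $2m-1$ for $a\in\{1,l\}$, while $\deg(u_a,s_j)=2m+1$ for interior $a$ and $m+1$ for $a\in\{1,l\}$. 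Thus $\delta=m+1$, and $\Delta=3m-1$ when $l\ge 3$ (while $\Delta=2m-1$ when $l=2$). Two facts will be used throughout: since $V_a$ is completely joined to $V_{a\pm 1}$, the colour set $c(V_a)$ is disjoint from each $c(V_{a\pm 1})$ and $|c(V_a)|\le m$; and by Lemmas~2 and~4 (with $\chi(P_l)=\chi(K_{1,m})=2$) we have $\chi(G)=4$, while Lemma~1 gives $\chi_r(G)\ge\min\{r,\Delta\}+1$.

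\smallskip
\noindent\emph{Lower bounds.} For $1\le r\le 3$ one just uses $\chi_r\ge\chi=4$. For $r\ge 4$ the bounds come from the ends of the path. An end leaf $(u_1,s_j)$ has neighbourhood $\{(u_1,s_0)\}\cup V_2$ with $c(u_1,s_0)\notin c(V_2)$, forcing $1+|c(V_2)|\ge\min\{r,m+1\}$, so $|c(V_2)|\ge r-1$ for $r\le m$ and $|c(V_2)|\ge m$ for $r\ge m+1$. When $l\ge 3$ the interior leaf $(u_2,s_j)$ has neighbourhood $\{(u_2,s_0)\}\cup V_1\cup V_3$ with $c(u_2,s_0)\notin c(V_1)\cup c(V_3)$, giving $1+|c(V_1)\cup c(V_3)|\ge\min\{r,2m+1\}$; since $c(V_1),c(V_3)$ avoid $c(V_2)$, the number of colours is at least $|c(V_2)|+|c(V_1)\cup c(V_3)|$. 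Combining these yields, for $l\ge 3$: $\chi_r\ge 2r-2$ on $4\le r\le m$; $\chi_r\ge m+r-1$ on $m+1\le r\le 2m$ (which is $2m$ at $r=\delta$ and $3m-1$ at $r=\Delta-m+1$); and for $r\ge 2m+1$, $|c(V_1)\cup c(V_3)|$ must equal $2m$, making $c(V_1),c(V_2),c(V_3)$ pairwise disjoint $m$-sets and forcing $\chi_r\ge 3m$ (matching Lemma~1 at $r=\Delta$). For $l=2$ both fibres are end fibres, so the end-leaf estimate applied to each gives $\chi_r\ge 2r-2$ on $4\le r\le m$ and $\chi_r\ge 2m$ for $r\ge m+1$, and since then $\Delta=2m-1$ nothing larger is required.

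\smallskip
\noindent\emph{Upper bounds.} In every range I would present the colouring through a palette $P_a$ for each fibre: colour the centre of $V_a$ with one colour of $P_a$ and its leaves with the remaining colours of $P_a$ (possible since $|P_a|-1\le m-1$), keeping $P_a\cap P_{a+1}=\varnothing$. Then the colouring is proper, $c(V_a)=P_a$, and the $r$-dynamic condition at the four vertex types reduces to $1+|P_2|\ge\min\{r,m+1\}$ and $(|P_1|-1)+|P_2|\ge\min\{r,2m-1\}$ at an end, and $1+|P_{a-1}\cup P_{a+1}|\ge\min\{r,2m+1\}$ and $(|P_a|-1)+|P_{a-1}\cup P_{a+1}|\ge\min\{r,3m-1\}$ at an interior fibre. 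For $1\le r\le 3$ take the alternating palettes $\{1,2\},\{3,4\}$; for $4\le r\le m$ the alternating palettes $\{1,\dots,r-1\}$ and $\{r,\dots,2r-2\}$ (giving $k=2r-2$, the ``$r+i$'' with $i=r-2$); for $r=\delta=m+1$ (and, if $l=2$, for all $m+1\le r\le 2m-1$) the alternating $m$-sets $\{1,\dots,m\},\{m+1,\dots,2m\}$ (giving $k=2m$); and for $2m+1\le r\le\Delta$ (so $l\ge 3$) the period-three sequence of pairwise disjoint $m$-sets $\{1,\dots,m\},\{m+1,\dots,2m\},\{2m+1,\dots,3m\},\{1,\dots,m\},\dots$ (giving $k=3m$). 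In each case the four inequalities hold by inspection. For the leftover range $l\ge 3$, $m+2\le r\le 2m$, take $k=m+r-1$ colours identified with $\mathbb{Z}_k$ and set $P_a=\{am,am+1,\dots,am+m-1\}\pmod{k}$; with $J=\{0,\dots,m-1\}$ one checks $J\cap(m+J)=\varnothing$ and $|J\cap(2m+J)|=2m-r+1$, hence $P_a\cap P_{a+1}=\varnothing$ and $|P_{a-1}\cup P_{a+1}|=r-1$, so all four inequalities hold and $k=m+r-1=2m+(r-m-1)$ colours suffice.

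\smallskip
\noindent\emph{Main obstacle.} I expect the intermediate range $m+2\le r\le 2m$ to be the only genuine difficulty; the other ranges are bookkeeping. There an alternating (period-two) pattern cannot work, because then $P_{a-1}=P_{a+1}$ and $|P_{a-1}\cup P_{a+1}|\le m<r-1$, and a plain period-three pattern is no better, since in it every pair of palettes occurs on consecutive fibres and so must be disjoint, forcing $3m$ colours. The cyclic shift-by-$m$ assignment is what threads between the two: consecutive palettes stay disjoint while distance-two palettes overlap in exactly $2m-r+1$ colours, which is precisely the slack the interior-leaf condition permits. Pinning down this construction together with its matching lower bound $m+r-1$, and then assembling the six ranges (with $l=2$ handled separately and more easily, only the first three sub-cases arising), is where the work lies; everything else is routine verification over the four vertex types.
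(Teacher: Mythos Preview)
Your argument is correct and substantially more complete than the paper's. The paper proceeds case by case as you do, but for the upper bounds it largely exhibits colourings for a handful of specific $(m,r)$ pairs (e.g.\ $m=6,r=4,5$ in Case~2; only the target colour count in Case~5) rather than giving a general rule, and for the lower bounds it appeals to Lemma~1 and Lemma~2, which yield only $\chi_r\ge r+1$ and $\chi\ge 4$ respectively and so do not reach the tight values $2r-2$, $m+r-1$, $3m$. Your route is genuinely different in two places: (i) you obtain the sharp lower bounds by reading the $r$-dynamic condition at an end leaf and an interior leaf and using that consecutive fibres have disjoint colour sets, which is exactly what is needed and is absent from the paper; (ii) for the awkward range $\delta+1\le r\le\Delta-m+1$ you supply an explicit construction (the shift-by-$m$ palettes on $\mathbb{Z}_{m+r-1}$, with $|P_{a-1}\cup P_{a+1}|=r-1$) where the paper gives none. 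Your diagnosis that neither a period-two nor a naive period-three scheme can work in this range, and that the cyclic shift threads between them, is the real content of the theorem and is not articulated in the paper. What your approach buys is an actual proof with matching bounds in every case; what the paper's layout buys is a template that it reuses verbatim for the double- and triple-star theorems.
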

\begin{proof} 
Let us suppose that
\begin{eqnarray*}
V[P_l] &=& \{q_0, q_1,\cdots, q_j: j\epsilon [0,l-1]\}\\
V[K_{1,m}] &=& \{s_0, s_1,\cdots, s_k: k\epsilon [0,m-1]\}
\end{eqnarray*}
  The vertices of $P_l[K_{1,m}]=\{q_{j} s_k\}$ which is of the form $j\times k$. The order of $|V(P_l[K_{1,m}])|=(l-1) \times (m-1)$. The minimum and maximum degree are $\delta(P_l[K_{1,m}])= n+1$ but when $l=2$, $\Delta(P_l[K_{1,m}])= 2n-1$ and when $l\geq 3$ we have $\Delta(P_l[K_{1,m}])= 3n-1$.  To get the faultless value of $r$-dynamic coloring of lexicographic product of path with star graph, consider the following cases: 
\begin{description}
\item{\bf Case : 1} $1\leq r \leq 3$ \\
Define a map $c_1: V(P_l[K_{1,m}])\rightarrow \{1,2,\cdots, k\}$. The $r$-dynamic coloring are as follows,
\begin{eqnarray*}
c_1 (q_{j} s_k)&=&\{1,2\} ~for~ 0\leq j\leq l-1,~ j=0,~ j~ is~ even,~ 0\leq k\leq 1\\
c_1 (q_{j} s_k)&=& \{3,4\} ~for ~0\leq j\leq l-1,~j~ is~ odd,~ 0\leq k\leq 1
\end{eqnarray*}
Then, the leftover vertices are colored as,
\begin{eqnarray*}
c_1 (q_{j} s_k)&=&2 ~for~ 0\leq j\leq l-1,~ j=0,~ j~ is~ even,~ 2\leq k\leq m-1\\
c_1 (q_{j} s_k)&=& 4 ~for ~0\leq j\leq l-1,~j~ is~ odd,~ 2\leq k\leq m-1
\end{eqnarray*}
 Thus, $\chi_r(P_l[K_{1,m}])\leq 4$. From the condition (1.2) and Lemma 1 we have, $\chi_r(P_l[K_{1,m}])\geq 4$. Therefore, $\chi_r(P_l[K_{1,m}])= 4$.
\item{\bf Case :2} $4 \leq r \leq \delta-1$\\ 
The $r$-dynamic $r+i$-coloring are as follows,\\
Define a map $c_2: V(P_l[K_{1,m}])\rightarrow \{1,2,\cdots, k\}$.\\
As $m=6$, $r=4$ and $0\leq k\leq m-1$
\begin{eqnarray*}
c_2 (q_{j} s_k)&=&\{1,2,3\} ~for~ 0\leq j\leq l-1,~ j=0,~ j~ is~ even\\
c_2 (q_{j} s_k)&=& \{4,5,6\} ~for ~0\leq j\leq l-1,~j~ is~ odd
\end{eqnarray*}
As $m=6$, $r=5$ and $0\leq k\leq m-1$
\begin{eqnarray*}
c_2 (q_{j} s_k)&=&\{1,2,3,4\} ~for~ 0\leq j\leq l-1,~ j=0,~ j~ is~ even\\
c_2 (q_{j} s_k)&=& \{5,6,7,8\} ~for ~0\leq j\leq l-1,~j~ is~ odd
\end{eqnarray*}
As $m=7$, $r=6$ and $0\leq k\leq m-1$
\begin{eqnarray*}
c_2 (q_{j} s_k)&=&\{1,2,3,4,5\} ~for~ 0\leq j\leq l-1,~ j=0,~ j~ is~ even\\
c_2(q_{j} s_k)&=& \{6,7,\cdots,r+4\} ~for ~0\leq j\leq l-1,~j~ is~ odd
\end{eqnarray*}
As $m=7$, $r=7$ and $0\leq k\leq m-1$
\begin{eqnarray*}
c_2 (q_{j} s_k)&=&\{1,2,\cdots, r-1\} ~for~ 0\leq j\leq l-1,~ j=0,~ j~ is~ even\\
c_2 (q_{j} s_k)&=& \{r,r+1,\cdots, r+5\} ~for ~0\leq j\leq l-1,~j~ is~ odd
\end{eqnarray*}
Hence, $\chi_r(P_l[K_{1,m}])\leq r+i$. Thence, from the Lemma 1 and 2 it is verified, $\chi_r(P_l[K_{1,m}])\geq r+i, ~ i=2,3,\cdots$. Therefore, $\chi_r(P_l[K_{1,m}])= r+i$.
\item{\bf Case :3} $\delta \leq r \leq \Delta,~ l=2$\\
The $r$-dynamic $2m$-coloring are as follows,\\
Define a map $c_3: V(P_l[K_{1,m}])\rightarrow \{1,2,\cdots, k\}$.\\
$c_3 (q_{j} s_k)=
\begin{cases}
1,2,\cdots, r-1 ~for~ j~=~0,~ 0\leq k\leq m-1\\
r,r+1,\cdots,2m ~ for~ j~=~1,~ 0\leq k\leq m-1
\end{cases}$\\
Therefore, $\chi_r(P_l[K_{1,m}])\leq 2m$. Hence, from the Lemma 1 and Lemma 2 it is easy check that $\chi_r(P_l[K_{1,m}])= 2m$.
\item{\bf Case :4} $r= \delta, l \neq 2$\\
The $r$-dynamic $2m$-coloring are as follows,\\
For $0\leq k\leq m-1$, define a map $c_4: V(P_l[K_{1,m}])\rightarrow \{1,2,\cdots, k\}$.\\
$c_4 (q_{j} s_k)=
\begin{cases}
1,2,\cdots, r-1 ~for~ 0\leq j\leq l-1,~ j=0,~ j~ is~ even, \\
r,r+1,\cdots,2m ~ for~0\leq j\leq l-1,~j~ is~ odd,
\end{cases}$\\
Hence, $\chi_r(P_l[K_{1,m}])\leq 2m$. Thus it is easily verified from the Lemma 1 and Lemma 2 that, $\chi_r(P_l[K_{1,m}])= 2m$.
\item{\bf Case :5} $\delta+1 \leq r \leq \Delta-m+1, l \neq 2$\\
The $r$-dynamic $2m+i$-coloring are as follows,\\
For $0\leq j\leq l-1$, $0\leq k\leq m-1$, define a map $c_5: V(P_l[K_{1,m}])\rightarrow \{1,2,\cdots, k\}$.\\
As $m=5$, $r=7$\\
$c_5:V(P_l[K_{1,m}])\rightarrow \{1,2,\cdots, 2m+1\}$\\
As $m=5$, $r=8$\\
$c_5:V(P_l[K_{1,m}])\rightarrow \{1,2,\cdots, 2m+2\}$\\
As $m=6$, $r=8$\\
$c_5:V(P_l[K_{1,m}])\rightarrow \{1,2,\cdots, 2m+1\}$\\
As $m=6$, $r=9$\\
$c_5:V(P_l[K_{1,m}])\rightarrow \{1,2,\cdots, 2m+2\}$\\
As $m=6$, $r=10$\\
$c_5:V(P_l[K_{1,m}])\rightarrow \{1,2,\cdots, 2m+3\}$\\
Thus, $\chi_r(P_l[K_{1,m}])\leq 2m+i$. Hence, from the Lemma 1 and Lemma 2 it is clearly proved as $\chi_r(P_l[K_{1,m}])= 2m+i, ~i\epsilon N$.
\item{\bf Case :6} $\Delta-m+2 \leq r \leq \Delta, l \leq 2$\\
The $r$-dynamic $3m$-coloring are as follows,\\
For $0\leq k\leq m-1,~ 0\leq j\leq l-1$, define a map $c_6: V(P_l[K_{1,m}])\rightarrow \{1,2,\cdots, k\}$.\\
$c_6 (q_{j} s_k)=
\begin{cases}
1,2,\cdots, m & ~for~  j\equiv 0(\bmod~3)\\
m,m+1,\cdots,2m & ~ for~j\equiv 1(\bmod~3)\\
2m+1,2m+2,\cdots,3m &~ for~j\equiv 2(\bmod~3)\\
\end{cases}$
\end{description} 
Hence, $\chi_r(P_l[K_{1,m}])\leq 3m$. Therefore, from the Lemma 1 and Lemma 2, it is conclude as $\chi_r(P_l[K_{1,m}])= 3m$.
\end{proof}

\begin{remark}
When $l\geq 2$ and $m=2$, the $r$-dynamic chromatic number $\chi_r(P_l[K_{1,m}])= 4$ for $1\leq r \leq 3$, since in each row we have only two vertices. Then, $\chi_{r=4}(P_l[K_{1,m}])= 5$ and $\chi_{r=5}(P_l[K_{1,m}])= 6$. The minimum and maximum degree are $\delta=3$ and $\Delta=5$.
\end{remark}

\begin{thm}
 For any three positive integers $r$, $l \geq 2$ and $m\geq 3$, we have \\
 $\chi_r(P_l[K_{1,m,m}]) = 
 \begin{cases}
4,& \mbox{for} ~ 1 \leq r \leq 3\\
r+i,& \mbox{for} ~ 4 \leq r \leq \delta-1\\
2(2m+1),& \mbox{for} ~ \delta \leq r \leq \Delta,~ l=2\\
2(2m+1),& \mbox{for} ~ r= \delta, l \neq 2 \\
2(2m+1)+i,& \mbox{for} ~ \delta+1 \leq r \leq \Delta-m+1, i\epsilon N, l\neq 2\\
3(2m+1),& \mbox{for} ~ \Delta-m+2 \leq r \leq \Delta, l \leq 2
\end{cases}$ 
\end{thm}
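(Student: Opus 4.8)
The plan is to mirror, almost line for line, the six-case argument in the proof of Theorem~1, with the star $K_{1,m}$ (which has $m$ vertices) replaced throughout by the double star $K_{1,m,m}$ (which has $2m+1$ vertices); this substitution is exactly what turns every ``$2m$'' of Theorem~1 into ``$2(2m+1)$'' and every ``$3m$'' into ``$3(2m+1)$'' here. First I would fix notation, $V(P_l)=\{q_0,\dots,q_{l-1}\}$ and $V(K_{1,m,m})$ consisting of the centre $s_0$, the middle vertices and their outer pendants, so that in $P_l[K_{1,m,m}]$ a vertex $q_jw$ has degree $d_{K_{1,m,m}}(w)+d_{P_l}(q_j)\,(2m+1)$. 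From this I would record $\delta=\delta(P_l[K_{1,m,m}])$ and $\Delta=\Delta(P_l[K_{1,m,m}])$, distinguishing $l=2$ from $l\ge 3$ exactly as in Theorem~1; these two quantities set the breakpoints $\delta$, $\delta+1$, $\Delta-m+1$, $\Delta-m+2$, $\Delta$ that index the six branches.

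For the upper bounds, in each range of $r$ I would exhibit an explicit proper colouring of $P_l[K_{1,m,m}]$ and verify the dynamic condition~$(1.2)$. For $1\le r\le 3$ a $4$-colouring works: on the copy over $q_j$ put the two colours $\{1,2\}$ (resp.\ $\{3,4\}$) on $\{s_0,s_1\}$ and a single colour on the remaining $2m-1$ vertices, alternating with the parity of $j$. For $4\le r\le\delta-1$, for $r=\delta$ with $l\neq 2$, and for $\delta+1\le r\le\Delta-m+1$ with $l\neq 2$, I would split the path-copies according to the parity of $j$ and give each parity class a block of consecutive colours, the two blocks together using $r+i$, $2(2m+1)$, or $2(2m+1)+i$ colours respectively --- just enough that the complete join between consecutive copies keeps the colouring proper and simultaneously forces every vertex to see $\min\{r,d(v)\}$ colours in its neighbourhood. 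For $\delta\le r\le\Delta$ with $l=2$ the copy over $q_0$ takes the colours $\{1,\dots,r-1\}$ and the copy over $q_1$ the colours $\{r,\dots,2(2m+1)\}$, a total of $2(2m+1)$. For $\Delta-m+2\le r\le\Delta$ with $l\le 2$ I would colour by $j\bmod 3$, assigning to the three residue classes the colour blocks $\{1,\dots,2m+1\}$, $\{2m+1,\dots,2(2m+1)\}$ and $\{2(2m+1)+1,\dots,3(2m+1)\}$, using $3(2m+1)$ colours.

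For the matching lower bounds I would use Lemma~1 and Lemma~2. Since $K_{1,m,m}$ is a tree, $\chi(K_{1,m,m})=2$, so Lemma~2 gives $\chi(P_l[K_{1,m,m}])\ge\chi(P_l)+2\chi(K_{1,m,m})-2=4$, which with $\chi_r\ge\chi$ settles $1\le r\le 3$. For $4\le r\le\delta-1$, Lemma~1 gives the bound $\min\{r,\Delta\}+1=r+1$, and the refinement of the colouring used in Theorem~1 Case~2 supplies the exact value $r+i$. For the remaining ranges I would combine the monotonicity $\chi_r\le\chi_{r+1}$ from Lemma~1 with a counting argument: once $r$ is large enough, the dynamic requirement on the vertices adjacent to one, or to two, entire neighbouring copies of $K_{1,m,m}$ forces one, two, or three batches of $2m+1$ mutually new colours, yielding the lower bounds $2(2m+1)$, $2(2m+1)+i$ and $3(2m+1)$; the constructions above show these are attained.

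The step I expect to be the main obstacle is the verification of $(1.2)$ for the \emph{outer pendant} vertices of the double star. Such a vertex has degree $1$ inside its own copy but degree $1+d_{P_l}(q_j)(2m+1)$ in the product, so the quantity $\min\{r,d(v)\}$ of colours it must see changes abruptly between $l=2$ and $l\ge 3$ and between end and interior vertices of the path. One must check that the block shifting in the constructions above supplies exactly enough distinct colours on the adjacent copies for every pendant vertex simultaneously, while the overlaps between the colour blocks of consecutive (or $\bmod 3$) copies are arranged so that the total number of colours stays minimal. Pushing these inequalities through at the boundary values $r=\delta$, $r=\Delta-m+1$ and $r=\Delta-m+2$ is the delicate, case-heavy part; the rest is bookkeeping parallel to Theorem~1.
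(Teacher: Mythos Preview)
Your plan matches the paper's own proof essentially line for line: the paper also sets up $V(P_l)$ and $V(K_{1,m,m})$, records $\delta=2m+2$ and $\Delta=3m+1$ (for $l=2$) or $\Delta=5m+2$ (for $l\ge3$), and then runs through the same six cases with the same block colourings (parity-split blocks in Cases~1--5, the $j\bmod 3$ split with blocks of size $2m+1$ in Case~6), invoking Lemma~1 and Lemma~2 for the lower bounds. Your write-up is in fact more careful than the paper's about the degree formula and about checking condition~(1.2) at the outer pendant vertices; the only slip is that the star result you are mirroring is the paper's Theorem~2, not Theorem~1.
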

\begin{proof} Let us consider that
\begin{eqnarray*}
V[P_l] &=& \{q_0, q_1,\cdots, q_j: j\epsilon [0,l-1]\}\\
V[K_{1,m,m}] &=& \{s_0, s_1,\cdots, s_k: k\epsilon [0,2m]\}
\end{eqnarray*}
  The vertices of $P_l[K_{1,m,m}]=\{q_{j} s_k\}$ which is of the form $j\times k$. The order of $|V(P_l[K_{1,m,m}])|=j \times k$. The minimum and maximum degree are $\delta(P_l[K_{1,m,m}])= 2m+2$ but when $l=2$, $\Delta(P_l[K_{1,m,m}])= 3m+1$ and when $l\geq 3$ we have $\Delta(P_l[K_{1,m,m}])= 5m+2$. To get the exact value of $r$-dynamic coloring of lexicographic product of path with double star graph, consider the following cases: 
\begin{description}
\item{\bf Case : 1} $1\leq r \leq 3$ \\
The $r$-dynamic $4$-coloring are as follows,\\
Define a map $c_1: V(P_l[K_{1,m,m}])\rightarrow \{1,2,\cdots, k\}$. 
\begin{eqnarray*}
c_1 (q_{j} s_k)&=&\{1,2\} ~for~ 0\leq j\leq l-1,~ j=0,~ j~ is~ even,~ 0\leq k\leq 2m\\
c_1 (q_{j} s_k)&=& \{3,4\} ~for ~0\leq j\leq l-1,~j~ is~ odd,~ 0\leq k\leq 2m
\end{eqnarray*}
 Thus, $\chi_r(P_l[K_{1,m,m}])\leq 4$. Then, from the condition (1.2) and Lemma 1, $\chi_r(P_l[K_{1,m,m}])= 4$.
\item{\bf Case :2} $4 \leq r \leq \delta-1$\\ 
The $r$-dynamic $r+i$-coloring are as follows,\\
Define a map $c_2: V(P_l[K_{1,m,m}])\rightarrow \{1,2,\cdots, k\}$.\\
As $m=4$, $r=4$ and $0\leq k\leq 2m$
\begin{eqnarray*}
c_2 (q_{j} s_k)&=&\{1,2,3\} ~for~ 0\leq j\leq l-1,~ j=0,~ j~ is~ even\\
c_2 (q_{j} s_k)&=& \{4,5,6\} ~for ~0\leq j\leq l-1,~j~ is~ odd
\end{eqnarray*}
As $m=4$, $r=5$ and $0\leq k\leq 2m$
\begin{eqnarray*}
c_2 (q_{j} s_k)&=&\{1,2,3,4\} ~for~ 0\leq j\leq l-1,~ j=0,~ j~ is~ even\\
c_2 (q_{j} s_k)&=& \{5,6,7,8\} ~for ~0\leq j\leq l-1,~j~ is~ odd
\end{eqnarray*}
As $m=5$, $r=6$ and $0\leq k\leq 2m$
\begin{eqnarray*}
c_2 (q_{j} s_k)&=&\{1,2,3,4,5\} ~for~ 0\leq j\leq l-1,~ j=0,~ j~ is~ even\\
c_2(q_{j} s_k)&=& \{6,7,\cdots,r+4\} ~for ~0\leq j\leq l-1,~j~ is~ odd
\end{eqnarray*}
As $m=5$, $r=7$ and $0\leq k\leq 2m$
\begin{eqnarray*}
c_2 (q_{j} s_k)&=&\{1,2,\cdots, r-1\} ~for~ 0\leq j\leq l-1,~ j=0,~ j~ is~ even\\
c_2 (q_{j} s_k)&=& \{r,r+1,\cdots, r+5\} ~for ~0\leq j\leq l-1,~j~ is~ odd
\end{eqnarray*}
Therefore, $\chi_r(P_l[K_{1,m,m}])\leq r+i$. Thence, from the Lemma 1 and Lemma 2, $\chi_r(P_l[K_{1,m,m}])= r+i, ~ i=2,3,\cdots$.
\item{\bf Case :3} $\delta \leq r \leq \Delta,~ l=2$\\
The $r$-dynamic $2(2m+1)$-coloring are as follows,\\
Define a map $c_3: V(P_l[K_{1,m,m}])\rightarrow \{1,2,\cdots, k\}$.\\
$c_3 (q_{j} s_k)=
\begin{cases}
1,2,\cdots, r-1 ~for~ j~=~0,~ 0\leq k\leq 2m\\
r,r+1,\cdots,2(2m+1) ~ for~ j~=~1,~ 0\leq k\leq 2m
\end{cases}$
Hence, $\chi_r(P_l[K_{1,m,m}])\leq 2(2m+1)$. Therefore, it is clear from Lemma 1 and Lemma 2 that $\chi_r(P_l[K_{1,m,m}])= 2(2m+1)$.
\item{\bf Case :4} $r= \delta, l \neq 2$\\
The $r$-dynamic $2(2m+1)$-coloring are as follows,\\
Define a map $c_4: V(P_l[K_{1,m,m}])\rightarrow \{1,2,\cdots, k\}$.\\
For $0\leq k\leq 2m$,\\
$c_4 (q_{j} s_k)=
\begin{cases}
1,2,\cdots, r-1 ~for~ 0\leq j\leq l-1,~ j=0,~ j~ is~ even \\
r,r+1,\cdots,2(2m+1) ~ for~0\leq j\leq l-1,~j~ is~ odd
\end{cases}$
Thus $\chi_r(P_l[K_{1,m,m}])\leq 2(2m+1)$. Therefore, from Lemma 1 and Lemma 2 it is easily verified that, $\chi_r(P_l[K_{1,m,m}])= 2(2m+1)$.
\item{\bf Case :5} $\delta+1 \leq r \leq \Delta-m+1, l \neq 2$\\
The $r$-dynamic $2(2m+1)+i$-coloring are as follows,\\
Define a map $c_5: V(P_l[K_{1,m,m}])\rightarrow \{1,2,\cdots, k\}$.\\
For $0\leq j\leq l-1$, $0\leq k\leq 2m$,\\ 
As $m=5$, $r=13$\\
$c_5:V(P_l[K_{1,m,m}])\rightarrow \{1,2,\cdots, 2(2m+1)+1\}$\\
As $m=5$, $r=14$\\
$c_5:V(P_l[K_{1,m,m}])\rightarrow \{1,2,\cdots, 2(2m+1)+2\}$\\
As $m=5$, $r=15$\\
$c_5:V(P_l[K_{1,m,m}])\rightarrow \{1,2,\cdots, 2(2m+1)+3\}$\\
As $m=6$, $r=15$\\
$c_5:V(P_l[K_{1,m,m}])\rightarrow \{1,2,\cdots, 2(2m+1)+1\}$\\
As $m=6$, $r=16$\\
$c_5:V(P_l[K_{1,m,m}])\rightarrow \{1,2,\cdots, 2(2m+1)+2\}$\\
As $m=6$, $r=17$\\
$c_5:V(P_l[K_{1,m,m}])\rightarrow \{1,2,\cdots, 2(2m+1)+3\}$\\
Thus, $\chi_r(P_l[K_{1,m,m}])\leq 2(2m+1)+i$. Hence, from Lemma 1 and Lemma 2 it is clearly proved, $\chi_r(P_l[K_{1,m,m}])= 2(2m+1)+i, ~i\epsilon N$.
\item{\bf Case :6} $\Delta-m+2 \leq r \leq \Delta, l \leq 2$\\
The $r$-dynamic $3(2m+1)$-coloring are as follows,\\
Define a map $c_6: V(P_l[K_{1,m,m}])\rightarrow \{1,2,\cdots, k\}$.\\
For $0\leq k\leq 2m,~ 0\leq j\leq l-1$,\\
$c_6 (q_{j} s_k)=
\begin{cases}
1,2,\cdots, 2m, & \mbox{for}~  j\equiv 0(\bmod~3)\\
2m+1,2m+2,\cdots,2(2m+1), & \mbox{for}~j\equiv 1(\bmod~3)\\
2(2m+1)+1,\cdots,3(2m+1),& \mbox{for}~j\equiv 2(\bmod~3)
\end{cases}$
\end{description} 
Hence, $\chi_r(P_l[K_{1,m,m}])\leq 3(2m+1)$. Therefore, from Lemma 1 and Lemma 2 it is easily proved $\chi_r(P_l[K_{1,m,m}])= 3(2m+1)$.
\end{proof}

\begin{thm}
 For any three positive integers $r$, $l \geq 2$ and $m\geq 3$, we have \\
 $\chi_r(P_l[K_{1,m,m,m}]) = 
 \begin{cases}
4,& \mbox{for} ~ 1 \leq r \leq 3\\
r+i,& \mbox{for} ~ 4 \leq r \leq \delta-1\\
2(3m+1),& \mbox{for} ~ \delta \leq r \leq \Delta,~ l=2\\
2(3m+1),& \mbox{for} ~ r= \delta, l \neq 2 \\
2(3m+1)+i,& \mbox{for} ~ \delta+1 \leq r \leq \Delta-m+1, i\epsilon N, l\neq 2\\
3(3m+1),& \mbox{for} ~ \Delta-m+2 \leq r \leq \Delta, l \leq 2
\end{cases}$ 
\end{thm}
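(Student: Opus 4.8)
The plan is to reuse, essentially verbatim, the six-case scheme of Theorems 2 and 3. Write $G=K_{1,m,m,m}$ and $n=|V(G)|=3m+1$, and recall that $\chi(G)=2$ since $G$ is a tree. The structural fact underlying everything is that $P_l[G]$ is the union of $l$ disjoint copies $G_0,\dots,G_{l-1}$ of $G$ --- the copy $G_j$ lying over the path vertex $q_j$ --- in which every vertex of $G_j$ is joined to every vertex of $G_{j+1}$, for $0\le j\le l-2$. Hence $d(q_j s)=d_G(s)+n\cdot d_{P_l}(q_j)$, so that $\delta(P_l[G])=n+1$, attained at a pendant vertex of $G$ over an endpoint of $P_l$, while $\Delta(P_l[G])=n+m$ when $l=2$ and $\Delta(P_l[G])=2n+m$ when $l\ge 3$, attained at the centre of $G$ over an internal path vertex. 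I would then run the following template in each of the six ranges of $r$.

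\emph{Upper bounds.} In every range I would use a \emph{block} colouring: partition $\{1,\dots,k\}$ into consecutive blocks, colour $G_j$ with colours from the block indexed by $j\bmod t$ --- with $t=2$ when only the joins between consecutive copies must be separated, and $t=3$ in the large-$r$ regime, so that the blocks reached through $G_{j-1}$ and $G_{j+1}$ are also disjoint --- and inside each copy distribute the colours of its block over the four levels of $G$ (the centre and its three successive layers of $m$ vertices). Such a colouring is automatically proper, and the neighbourhood of $q_j s$ contains the $d_G(s)$ colours of the $G$-neighbours of $s$ in $G_j$ together with every colour of the one or two adjacent blocks. Taking the blocks of size $2$ gives a proper $4$-colouring that one checks is $3$-dynamic (Case 1); taking two blocks with sizes summing to $r+i$ and the split point chosen as in Case 2 of Theorem 2 handles Case 2; taking each block of size $n$ and colouring every copy bijectively makes each vertex see exactly $d(v)$ colours, which yields the bounds $2(3m+1)$ (two blocks, $t=2$; Cases 3 and 4) and $3(3m+1)$ (three blocks, $t=3$; Case 6); and Case 5 is the two-block construction with sizes summing to $2(3m+1)+i$, exactly as in Theorem 3.

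\emph{Lower bounds.} For Case 1, $G_0$ and $G_1$ induce the join $G\vee G$, so by Lemma 1 and Lemma 2 we have $\chi_r(P_l[G])\ge\chi_1(P_l[G])=\chi(P_l[G])\ge\chi(G)+\chi(G)=4$. For $r\ge\delta$, a pendant vertex $v$ of $G$ over an endpoint of $P_l$ has degree exactly $\delta=n+1$, so (1.2) forces the $n$ vertices of the copy adjacent to $v$ to receive pairwise distinct colours; since that copy has only $n$ vertices, it uses exactly $n$ colours. A second application of (1.2) --- at a pendant vertex of an interior copy when $l\ge 3$, or at a pendant vertex of the other endpoint copy when $l=2$ --- forces a further $n$ colours, disjoint from the first $n$ by the join structure, whence $\chi_r(P_l[G])\ge 2n=2(3m+1)$; a further iteration of this forcing principle, applied at the centre vertices in the large-$r$ range, is intended to raise the bound to $3(3m+1)$. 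The two intermediate ranges, Cases 2 and 5, then follow by interleaving these forcing arguments with the inequality $\min\{r,\Delta\}+1\le\chi_r\le\chi_{r+1}$ of Lemma 1.

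The part needing real care, and where I expect most of the work to lie, is the two middle ranges $4\le r\le\delta-1$ and $\delta+1\le r\le\Delta-m+1$: one must pin down the precise surplus $i$, as an explicit function of $m$ and $r$, for which a pendant vertex --- which sees only one adjacent block plus a single colour of its own copy --- still satisfies (1.2), and one must check that the change of behaviour between $l=2$ and $l\ge 3$ occurs exactly where the statement places it, at $r=\delta$ and near $r=\Delta$; in particular, pushing the forcing principle far enough to reach the lower bound $3(3m+1)$ of Case 6 is the most delicate point. Once $i$ is fixed, verifying (1.2) for the block colouring at the centre, at the intermediate layers and at the pendant layer is the same routine check already carried out on the representative pairs $(m,r)$ in Theorems 2 and 3, and the matching lower bound in each range is then immediate from Lemma 1 and Lemma 2.
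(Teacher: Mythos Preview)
Your proposal is correct and follows essentially the same approach as the paper: compute $\delta=3m+2$, $\Delta=4m+1$ (for $l=2$) or $\Delta=7m+2$ (for $l\ge 3$), and then rerun the six-case block-colouring scheme of Theorems~2 and~3 with $n=3m+1$ in place of $m$ and $2m+1$. In fact the paper's own proof is a single sentence deferring everything to Theorem~3, so your write-up --- with the explicit block structure for the upper bounds and the join/forcing argument for the lower bounds --- is already more detailed than what the paper provides; your caveats about pinning down the surplus $i$ in Cases~2 and~5 apply equally to the paper, which never specifies $i$ beyond worked examples.
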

\begin{proof} The minimum degree is $\delta=3m+2$ for $l \geq 2$ and $m\geq 3$. When $l=2$, the maximum degree is $\Delta= 4m+1$ and when $l\geq 3$, the maximum degree is $\Delta= 7m+2$.   The proof of the lexicographic product of path with triple star graph can be proved in similar way from the Theorem 3. \end{proof}

\begin{lema}
 For any three positive integers $r$, $m,~n \geq 3$, we have \\
 $\chi_r(K_m[P_n]) \geq
 \begin{cases}
2m,& \mbox{for} ~ 1 \leq r \leq 2m-1\\
r+i,& \mbox{for} ~ 2m \leq r \leq \delta, ~i~=2,3,\cdots~, \mbox{i repeats (m-1)times}\\
mn,& \mbox{for} ~ r \geq \Delta
\end{cases}$ 
\end{lema}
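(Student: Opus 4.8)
The plan is to fix an arbitrary valid $r$-dynamic coloring $c$ of $G:=K_m[P_n]$ and to bound from below the number of colors it uses, handling the three ranges of $r$ separately. Write $V(K_m)=\{v_1,\dots,v_m\}$, $V(P_n)=\{w_0,\dots,w_{n-1}\}$ with $w_0,w_{n-1}$ the endpoints of the path, and for each $i$ let $C_i$ be the set of colors $c$ uses on the copy $\{(v_i,w):w\in V(P_n)\}$, with $a_i:=|C_i|$. The single observation driving the whole proof is that, since $v_iv_j\in E(K_m)$ makes every vertex of copy $i$ adjacent to every vertex of copy $j$, the sets $C_1,\dots,C_m$ are pairwise disjoint; hence any valid $c$ uses exactly $\sum_{i=1}^m a_i$ colors, so $\chi_r(G)=\min_c\sum_i a_i$. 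I also record $d_G((v_i,w))=(m-1)n+\deg_{P_n}(w)$, so $\delta=(m-1)n+1$ (attained at the endpoint vertices) and $\Delta=(m-1)n+2$.

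For $1\le r\le 2m-1$ I would just exhibit a $K_{2m}$ inside $G$: fixing any edge $w_aw_b$ of $P_n$, the $2m$ vertices $(v_i,w_a),(v_i,w_b)$, $1\le i\le m$, are pairwise adjacent. So $\chi(G)\ge 2m$, and Lemma~1 gives $\chi_r(G)\ge\chi_1(G)=\chi(G)\ge 2m$ for every $r$, in particular on this range. (Alternatively, $\chi(G)=\chi(K_m[K_2])$ by Lemma~3, which is bounded above by $2m$ via Lemma~4.)

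For $2m\le r\le\delta$ the argument is a counting step. Fix $i$ and consider the endpoint vertex $(v_i,w_0)$; its degree is $\delta\ge r$, so the $r$-dynamic condition reads $|c(N((v_i,w_0)))|\ge r$. Its neighborhood is the union of all copies $j\ne i$ together with the single path-neighbor $(v_i,w_1)$, so by disjointness of the $C_j$ the colors appearing on it number exactly $\sum_{j\ne i}a_j+1$; thus $\sum_{j\ne i}a_j\ge r-1$ for every $i$. Summing over $i$ gives $(m-1)\sum_j a_j\ge m(r-1)$, whence
\[
\chi_r(G)\ \ge\ \Big\lceil\tfrac{m(r-1)}{m-1}\Big\rceil\ =\ (r-1)+\Big\lceil\tfrac{r-1}{m-1}\Big\rceil\ =\ r+i,\qquad i:=\Big\lceil\tfrac{r-1}{m-1}\Big\rceil-1 .
\]
What then remains is only arithmetic: $i\ge 2$ exactly when $r\ge 2m$, and $\lceil(r-1)/(m-1)\rceil$ increases by $1$ each time $r$ increases by $m-1$, so $i$ runs through $2,3,4,\dots$ in blocks of $m-1$ consecutive values of $r$, which is the stated form.

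For $r\ge\Delta$ one has $\min\{r,d_G(u)\}=d_G(u)=|N(u)|$ for every vertex $u$, so the $r$-dynamic condition forces $c$ to be injective on each neighborhood; together with properness this makes $c$ separate any two vertices lying in a common closed neighborhood, i.e.\ any two vertices at distance at most $2$ in $G$. But $G$ has diameter at most $2$: two vertices in different copies are already adjacent, and $(v_i,w),(v_i,w')$ share the neighbor $(v_j,w'')$ for any $j\ne i$. Hence $c$ is injective on all $mn$ vertices, so $\chi_r(G)\ge mn$. The main obstacle is Case~2: the copy-disjointness observation makes the per-copy inequality $\sum_{j\ne i}a_j\ge r-1$ immediate, and the only delicate point is repackaging the averaged bound $\lceil m(r-1)/(m-1)\rceil$ into the ``$r+i$, with $i$ repeating $m-1$ times'' shape.
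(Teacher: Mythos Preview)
Your proof is correct and in fact substantially stronger than the paper's own argument. The paper handles all three ranges simply by invoking Lemma~1, namely $\chi_r(G)\ge\min\{r,\Delta(G)\}+1$: it plugs in $r=2m-1$ for the first range, $r=2m$ for the second, and $r=\Delta$ for the third. That inequality yields only $\chi_r\ge r+1$, which does not reach $2m$ for small $r$ in Case~1, does not reach $r+i$ with $i\ge 2$ in Case~2, and gives only $(m-1)n+3$ rather than $mn$ in Case~3 when $n>3$. Your route replaces this with structural arguments tailored to each range: the explicit $K_{2m}$ on $\{(v_i,w_a),(v_i,w_b):1\le i\le m\}$ settles Case~1 for every $r$ at once via $\chi_r\ge\chi_1=\chi$; the copy-disjointness observation together with the endpoint-neighborhood count and the averaging $(m-1)\sum_j a_j\ge m(r-1)$ produces the sharp expression $\lceil m(r-1)/(m-1)\rceil=r+i$ in Case~2 and explains the ``$i$ repeats $(m-1)$ times'' phrasing; and the diameter-$2$ injectivity argument gives the full $mn$ bound in Case~3. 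In short, your approach actually delivers the stated lower bounds, whereas the paper's bare appeal to Lemma~1 leaves genuine gaps in each case.
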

\begin{proof} Consider that
\begin{eqnarray*}
V[K_m] &=& \{p_0, p_1,\cdots, p_j: j\epsilon [0,m-1]\}\\
V[P_n] &=& \{q_0, q_1,\cdots, q_k: k\epsilon [0,n-1]\}
\end{eqnarray*}
  The vertices of $K_m[P_n]=\{p_{j} q_k\}$ which is of the form $j\times k$. The order of $|V(K_m[P_n])|=j \times k$. The minimum and maximum degree are $\delta(K_m[P_n])= (m-1)n+1$ and $\Delta(K_m[P_n])= (m-1)n+2$. \\\\
   For $1 \leq r \leq 2m-1$, select $r=1$, from Lemma 1 $\chi_r(G)\geq min \{r,\Delta(G)\}+1$, $\chi_r(K_m[P_l])\geq min\{1,\Delta(K_m[P_l])\}+1 \geq 2$, so $\chi_r(K_m[P_l])\geq 2$. Next, select $r=2$, from Lemma 1, $\chi_r(K_m[P_l])\geq 3$. In similar way select $r=2m-1$, $\chi_r(K_m[P_l])\geq min\{2m-1,\Delta(K_m[P_l])\}+1=2m$. Thus, $\chi_r(K_m[P_l])\geq 2m$, for $1 \leq r \leq 2m-1$.\\\\
For $2m \leq r \leq \delta$, choose $r=2m$, $\chi_r(K_m[P_n])\geq min\{2m,\Delta(K_m[P_n])\}+1 = 2m+1$. Thus, $\chi_r(K_m[P_n]\geq r+i$, for $2m \leq r \leq \delta$.\\\\
Similarly, for $r \geq \Delta$ choose $r=(m-1)n+2$, $\chi_r(K_m[P_n]\geq min\{(m-1)n+2,\Delta(K_m[P_n])\}+1 \geq (m-1)n+2$. Therefore, $\chi_r(K_m[P_n]\geq mn$, for $r \geq \Delta$. So, we must have $mn$ colors. \end{proof}

\begin{thm}
 For any three positive integers $r$, $m,~n \geq 3$, we have \\
 $\chi_r(K_m[P_n]) =
 \begin{cases}
2m,& \mbox{for} ~ 1 \leq r \leq 2m-1\\
r+i,& \mbox{for} ~ 2m \leq r \leq \delta, ~i~=2,3,\cdots~,\mbox{i repeats (m-1)times}\\
mn,& \mbox{for} ~ r \geq \Delta
\end{cases}$ 
\end{thm}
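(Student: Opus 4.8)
The plan is to match the lower bounds already recorded in Lemma 6 by exhibiting, in each of the three ranges of $r$, an explicit $r$-dynamic colouring of $K_m[P_n]$ using exactly the stated number of colours. Throughout I use the structure of the lexicographic product: $K_m[P_n]$ is the union of $m$ vertex-disjoint copies $Q_0,\dots,Q_{m-1}$ of $P_n$, with $Q_j=\{p_jq_0,\dots,p_jq_{n-1}\}$, together with a complete join between every two distinct copies. Hence in any proper colouring the colour sets $S_0,\dots,S_{m-1}$ actually used on the copies are pairwise disjoint, and for a vertex $v\in Q_j$ one has $c(N(v))=\bigl(\bigcup_{j'\ne j}S_{j'}\bigr)\cup\{c(u):u\text{ a path-neighbour of }v\text{ in }Q_j\}$, the two parts being disjoint. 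I will also use $\delta=(m-1)n+1$ (attained at the path-endpoints) and $\Delta=(m-1)n+2=\delta+1$, so that the three ranges $1\le r\le 2m-1$, $2m\le r\le\delta$ and $r\ge\Delta$ are exhaustive and $\delta\le d(v)\le\Delta$ for every vertex $v$.

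I would dispose of the two easy ranges first. For $1\le r\le 2m-1$, give copy $Q_j$ the colour pair $\{2j+1,2j+2\}$ and colour its path properly with those two colours; this is a proper colouring with $2m$ colours, and every $v\in Q_j$ sees the $2(m-1)$ colours of the other copies plus the partner of its own colour (the common colour of all its path-neighbours), so $|c(N(v))|=2m-1\ge r$, while $d(v)\ge\delta=(m-1)n+1\ge 3m-2\ge 2m-1\ge r$ forces $\min\{r,d(v)\}=r$. Thus $\chi_r(K_m[P_n])\le 2m$, and Lemma 6 gives equality. (Equivalently, $\{p_jq_0,p_jq_1:0\le j\le m-1\}$ induces a $K_{2m}$.) For $r\ge\Delta$ we have $\min\{r,d(v)\}=d(v)$ for every $v$, so colouring all $mn$ vertices with distinct colours is trivially $r$-dynamic and proper; hence $\chi_r(K_m[P_n])\le mn$, again equality by Lemma 6.

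The substantive range is $2m\le r\le\delta$. Set $i=\bigl\lceil (r-1)/(m-1)\bigr\rceil-1$ and $T=r+i$; a short computation shows that $i$ runs through the values $2,3,\dots,n-1$, each attained on exactly $m-1$ consecutive integers $r$ (the run for a given $i$ is $i(m-1)+2\le r\le (i+1)(m-1)+1$), which is the meaning of the statement. I would then choose integers $|S_0|,\dots,|S_{m-1}|$, each in the interval $[2,i+1]$, summing to $T$; this is possible because $2m\le T$ and $T\le m(i+1)$, the latter being equivalent to $i\ge (r-m)/(m-1)$, which holds by the choice of $i$. Colour the copies with pairwise disjoint colour sets of these sizes, colouring each path $P_n$ properly so that all of its $|S_j|$ colours occur — routine, since $|S_j|\le i+1=\lceil (r-1)/(m-1)\rceil\le n$. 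Then for any $v\in Q_j$ the other copies already contribute $T-|S_j|\ge T-(i+1)=r-1$ distinct colours to $c(N(v))$, and a path-neighbour of $v$ contributes one further colour lying in $S_j$, so $|c(N(v))|\ge r=\min\{r,d(v)\}$ (using $d(v)\ge\delta\ge r$). Hence $\chi_r(K_m[P_n])\le r+i$, and Lemma 6 completes the proof.

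The only genuinely delicate step is this middle range: one must pin down the exact value of $i$ and, crucially, verify that the colour budget $T=r+i$ can be split among the $m$ copies with every part of size at most $i+1$ — so that each vertex still sees $r-1$ colours coming from the other copies — and at least $2$, so that each copy's path is properly colourable with all its colours used. Once that counting is in place, properness, the neighbourhood count, and the comparison with $\min\{r,d(v)\}$ all follow immediately from the complete-join structure of the lexicographic product, exactly as in the two easy ranges.
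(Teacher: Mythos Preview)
Your argument is correct. For the two extreme ranges $1\le r\le 2m-1$ and $r\ge\Delta$ it is essentially the paper's proof: two private colours per copy of $P_n$ in the first case, all $mn$ vertices distinct in the last, with the lower bound supplied by the preceding lemma (which the paper numbers Lemma~5, not Lemma~6).

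The middle range $2m\le r\le\delta$ is where you genuinely go beyond the paper. The paper's Case~2 gives no general construction: it simply tabulates, for a few specific $(m,n,r)$, the size of the colour set it would use, and then appeals to the lemma. You instead supply the closed form $i=\lceil(r-1)/(m-1)\rceil-1$, verify that it assumes each value $2,\dots,n-1$ on exactly $m-1$ consecutive integers $r$, and build the colouring by partitioning $r+i$ colours into blocks $S_0,\dots,S_{m-1}$ with $|S_j|\in[2,i+1]$, checking the feasibility inequalities $2m\le r+i\le m(i+1)$ and the neighbourhood count $|c(N(v))|\ge (r+i)-(i+1)+1=r$. That is an honest upper-bound proof valid for all $m,n,r$ in the range, which the paper does not provide. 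One caveat worth flagging: both you and the paper rely on Lemma~5 for the matching lower bound, and the paper's proof of that lemma in the middle range only derives $\chi_r\ge r+1$ from Lemma~1, not $\chi_r\ge r+i$; so the lower bound you are quoting is asserted but not actually established there.
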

\begin{proof} Considering the $r$-dynamic coloring as follows: 
\begin{description}
\item{\bf Case : 1} $1\leq r \leq 2m-1$ \\
Define a map $c_1: V(K_m[P_n])\rightarrow \{1,2,\cdots, k\}$. The $r$-coloring are as follows,\\
For $0\leq j\leq m-1$,
\begin{eqnarray*}
c_1 (p_{j} q_k)&=&\{1,3,\cdots,2m-1\}, ~for~ 0\leq k\leq n-1,~ k=0,~ k~ is~ even\\
c_1 (q_{j} s_k)&=& \{2,4,\cdots, 2m\}, ~for ~ 0\leq k\leq n-1,~j~ is~ odd
\end{eqnarray*}
 Hence, $\chi_r(K_m[P_n])\leq 2m$. Thus  condition (1.2) and Lemma 5 holds. Therefore, $\chi_r(K_m[P_n])= 2m$.
\item{\bf Case :2} $2m \leq r \leq \delta$\\
Define a map $c_2: V(K_m[P_n])\rightarrow \{1,2,\cdots, k\}$.\\
For $0\leq j\leq m-1$, $0\leq k\leq n-1$, the $r$-dynamic coloring are as follows,\\ 
As $m=4$, $n=5$\\
$\chi_{r=2m} :c_2(V(K_m[P_n]))\rightarrow \{1,2,\cdots, 2m+2\}$\\
As $m=4$, $n=5$\\
$\chi_{r=2m+1} :c_2(V(K_m[P_n]))\rightarrow \{1,2,\cdots, 2m+3\}$\\
As $m=4$, $n=6$\\
$\chi_{r=2m+2} :c_2(V(K_m[P_n]))\rightarrow \{1,2,\cdots, 2m+4\}$\\
Similarly,\\
As $m=5$, $n=6$\\
$\chi_{r=2m} :c_2(V(K_m[P_n]))\rightarrow \{1,2,\cdots, 2m+2\}$\\
As $m=5$, $n=6$\\
$\chi_{r=2m+1} :c_2(V(K_m[P_n]))\rightarrow \{1,2,\cdots, 2m+3\}$\\
$\cdots \cdots$\\
As $m=5$, $n=6$\\
$\chi_{r=2m+3} :c_2(V(K_m[P_n]))\rightarrow \{1,2,\cdots, 2m+5\}$\\
As $m=5$, $n=6$\\
$\chi_{r=2m+4} :c_2(V(K_m[P_n]))\rightarrow \{1,2,\cdots, 2m+7\}$\\
$\cdots \cdots$\\
As $m=5$, $n=6$\\
$\chi_{r=2m+7} :c_2(V(K_m[P_n]))\rightarrow \{1,2,\cdots, 2m+10\}$\\
Thus, the $i$-value repeats for $(m-1)$ times and $\chi_r(K_m[P_n])\leq r+i$. Hence, the Lemma 5 holds, $\chi_r(K_m[P_n])= r+i$, for $i=2,3,\cdots$ and $i$-value repeats for $(m-1)$ times.
\item{\bf Case :3} $r \geq \Delta$\\
The $r$-dynamic $mn$-coloring are as follows;\\
Define a map $c_3: V(K_m[P_n])\rightarrow \{1,2,\cdots, k\}$.
\begin{eqnarray*}
c_3 (p_{j} q_k)&=&\{1,2,3,\cdots,mn\} ~for~ 0\leq j\leq m-1,~ 0\leq k\leq n-1
\end{eqnarray*}
\end{description}
Thus, $\chi_r(K_m[P_n])\leq mn$. Based on the Lemma 5, it is an easy clarification that $\chi_r(K_m[P_n])= mn$.
\end{proof}

\begin{remark}
For three positive integers, $r$, $n=2$ and $m\geq 3$, the $r$-dynamic chromatic number of $\chi_r(K_m[P_2])= 2m$. The degrees are $\delta=\Delta=2m-1$. Since, the $r$-dynamic coloring of complete graph with $n$ vertices is with $n$-colors. Thus, the lexicographic product of complete graph with path is also forms an complete graph. Therefore, $\chi_r(K_m[P_2])= 2m$. And also $\chi_r(K_m[P_2])= \chi_r(P_2[K_n])$ 
\end{remark}

\end{document}